%
\documentclass[12pt, reqno]{amsart}
\usepackage{amsmath, amsthm, amscd, amsfonts, amssymb, graphicx, color}
\usepackage[cp1251]{inputenc}
\usepackage[bookmarksnumbered, colorlinks, plainpages]{hyperref}
\hypersetup{colorlinks=true,linkcolor=red, anchorcolor=green, citecolor=cyan, urlcolor=red, filecolor=magenta, pdftoolbar=true}
\textheight 22.7truecm \textwidth 15.2truecm
\setlength{\oddsidemargin}{0.35in}\setlength{\evensidemargin}{0.35in}

\setlength{\topmargin}{-.5cm}

\newtheorem{theorem}{Theorem}[section]

\theoremstyle{definition}

\theoremstyle{remark}
\newtheorem{remark}[theorem]{Remark}
\numberwithin{equation}{section}

\begin{document}

\setcounter{page}{1}

\title[Nonlocal in time reaction-diffusion equations]{Global behavior of nonlocal in time reaction-diffusion equations}

\author[B. T. Torebek]{Berikbol T. Torebek}

\address{\textcolor[rgb]{0.00,0.00,0.84}{Berikbol T. Torebek \newline Institute of
Mathematics and Mathematical Modeling \newline 125 Pushkin str.,
050010 Almaty, Kazakhstan \newline Department of Mathematics: Analysis, Logic and Discrete Mathematics \newline Ghent University \newline Krijgslaan 281, Building S8, B 9000 Ghent, Belgium}}
\email{\textcolor[rgb]{0.00,0.00,0.84}{berikbol.torebek@ugent.be}}


\let\thefootnote\relax\footnote{$^{*}$Corresponding author}

\subjclass[2010]{Primary 35K55, 35R11; Secondary 35K57, 35A01.}

\keywords{Sonine kernel, reaction-diffusion equation, global existence, decay estimate.}

\begin{abstract} The present paper considers the Cauchy-Dirichlet problem for the time-nonlocal reaction-diffusion equation $$\partial_t (k\ast(u-u_0))+\mathcal{L}_x [u]=f(u),\,\,\,\, x\in\Omega\subset\mathbb{R}^n, t>0,$$ where $k\in L^1_{loc}(\mathbb{R}_+),$ $f$ is a locally Lipschitz function, $\mathcal{L}_x$ is a linear operator. This model arises when studying the processes of anomalous and ultraslow diffusions. Results regarding the local and global existence, decay estimates, and blow-up of solutions are obtained. The obtained results provide partial answers to some open questions posed by Gal and Varma \cite{Gal}, as well as Luchko and Yamamoto \cite{Luchko1}.
Furthermore, possible quasi-linear extensions of the obtained results are discussed, and some open questions are presented.
\end{abstract}
\maketitle
\tableofcontents
\section{Introduction}

In this paper, we consider a nonlocal in time reaction-diffusion equation
\begin{equation}  \label{1}\partial_t (k\ast(u-u_0))+\mathcal{L}_x [u]=f(u),\,\,\,\, x\in\Omega\subset\mathbb{R}^n, t>0,\end{equation}
with the inhomogeneous Cauchy data
\begin{equation}\label{2}u(x,0)=u_0(x),\,\,\,\,\,x\in\Omega,\end{equation}
and with the Dirichlet boundary condition
\begin{equation}  \label{3}u(x,t)=0,\,\,\,\, x\in\mathbb{R}^n\setminus\Omega,\,t\geq 0,\end{equation}
where $k\in L^1_{loc}(\mathbb{R}_+),$ $u_0$ is a given function, $f$ is a locally Lipschitz function, $\mathcal{L}_x$ is a linear operator. Here $\ast$ denotes the Laplace convolution, i.e.,
$$(k\ast v)(t)=\int\limits_0^tk(t-s)v(s)ds.$$

To study the problem \eqref{1}-\eqref{3}, we need to assume the following hypotheses:
\begin{description}
\item[(A)] The kernel function $k\in L^1_{loc}(\mathbb{R}_+)$ is nonnegative and nonincreasing, and there exists a function $l\in L^1_{loc}(\mathbb{R}_+),$ such that $(k\ast l)(t)=1$ for $t>0$.
\item[(B)] Suppose that $\mathcal{L}_x: D(\mathcal{L})\rightarrow L^2(\Omega)$ is an unbounded linear operator with boundary condition \eqref{3}, and $\mathcal{L}_x$ is the self-adjoint operator with positive discrete spectrum. There exist $C_\mathcal{L}>0$ such that if $u$ is as in \eqref{1}-\eqref{3}, then
\begin{equation}\label{Lineq}\int\limits_\Omega u(x,t)\mathcal{L}_x[u](x,t)dx\geq C_\mathcal{L}\|u(\cdot,t)\|^2_{L^2(\Omega)},\,t>0.\end{equation}
\item[(C)] The source function $f$ is locally Lipschitz and satisfies
\begin{align*}&f(y)=0,\,\,\,\,\,\text{if}\,\,\,y=0\,\,\text{or}\,\,y=1,\\&
f(y)< 0,\,\,\,\,\,\text{if}\,\,\,y\in(0,1),\\&
f(y)> 0,\,\,\,\,\,\text{if}\,\,\,y>1\,\,\text{or}\,\,y<0.
\end{align*}
\end{description}

The equation \eqref{1} describes many processes in physics; we will provide further details in subsection \ref{ApplPhys}. Also, one can find more information about the history of problem \eqref{1} in subsection \ref{HisBack}.

The main purpose of this paper is to study the qualitative properties of solutions to problem \eqref{1}-\eqref{3}, and the following will be the main results:
\begin{itemize}
\item Existence of local in time unique strong solution and its behavior for initial data satisfying $0\leq u_0\leq 1$;
\item Existence of global in time strong solutions, which satisfies
$$0\leq u(x,t)\leq 1,\, (x,t)\in \bar\Omega\times[0,\infty)$$ for $0\leq u_0\leq 1$;
\item Decay estimates of global in time solutions for $0\leq u_0\leq 1$;
\item Finite time blow-up solutions for $\int\limits_\Omega u_0(x)\phi_1(x)dx\geq M,\, M>0,$ where $\phi_1>0$ is a first eigenfunction of operator $\mathcal{L}_x$.
\end{itemize}
It should be noted that the results obtained in this paper will partially answer some open questions (more details can be found in Remarks \ref{rem2} and \ref{rem3} in Section \ref{Main}) posed by Gal and Warma in \cite{Gal} and posed by Luchko and Yamamoto in \cite{Luchko1}.

\subsection{Examples of (A), (B) and (C)}\label{exABC}
In this subsection, we demonstrate some examples showing that the class of hypotheses (A), (B) and (C) is not empty.
\subsubsection{Examples of (A)} Condition (A) was first considered by Sonine in \cite{Sonine}, since then a pair belonging to the class (A) is called the Sonine kernel, and (A) is called the Sonine condition. The simplest example of condition (A) is the pair $k(t)=\delta(t),\, l(t)=1$, which generates $\frac{d}{dt},\,\int\limits_0^t$. Here $\delta$ is a Dirac delta function.

We provide further examples that demonstrate that the Sonine condition (A) encompasses a wide class of different integro-differential operators.
\begin{itemize}
\item The most famous example of a pair $(k,l)$ is $$k(t)=g_{1-\alpha}(t),\,l(t)=g_{\alpha}(t),\,0<\alpha<1,$$ where $g_{\mu}(t)=\frac{t^{\mu-1}}{\Gamma(\mu)},\,\mu>0$. Here $\Gamma(\cdot)$ is the Euler's gamma function.

    In this case $\partial_t (k\ast v)(t)$ coincides with the Riemann-Liouville time-fractional derivative (see \cite{Kilbas}) $$\partial_t (g_{1-\alpha}\ast v)(t)=\frac{1}{\Gamma(1-\alpha)}\partial_t\int\limits_0^t(t-s)^{-\alpha}v(s,x)ds$$ of order $\alpha\in(0,1)$.
\item Let $(k,l)$ be defined as follows $$k(t)=\int\limits_0^1g_\alpha(t)d\alpha,\, l(t)=\int\limits_0^\infty \frac{e^{-st}}{1+s}ds,\,t>0.$$
    In this case $\partial_t (k\ast v)(t)$ coincides with the distributed order derivative \cite{Kochubei1}.
\item Let $$k(t)=g_{1-\alpha}(t)e^{-\mu t},\, l(t)=g_\alpha(t)e^{-\mu t}+\mu\int\limits_0^tg_\alpha(s)e^{-\mu s}ds,\,t>0,$$ then $\partial_t (k\ast v)(t)$ coincides with the tempered fractional derivative \cite{Zacher1}.
\item Another well-known example of pair $(k,l)$ is (see \cite{Sonine}) $$k(t)=(\sqrt{t})^{\alpha-1}J_{\alpha-1}\left(2\sqrt{t}\right),\,\,l(t)=(\sqrt{t})^{-\alpha}I_{-\alpha}\left(2\sqrt{t}\right),\,\alpha\in(0,1),$$ where $$J_{\nu}(y)=\sum\limits_{k=0}^\infty\frac{(-1)^k(y/2)^{2k+\nu}}{k!\Gamma(k+\nu+1)}, \,I_{\nu}(y)=\sum\limits_{k=0}^\infty\frac{(y/2)^{2k+\nu}}{k!\Gamma(k+\nu+1)}$$ are the Bessel and the modified Bessel functions, respectively.
\item Our last example of pair $(k,l)$ is the following (see \cite{Luchko2}) $$k(t)=g_{1-\beta+\alpha}(t)+g_{1-\beta}(t),\,\,l(t)=t^{\beta-1}E_{\alpha,\beta}(-t^\alpha),\,0<\alpha<\beta<1,$$ where $$E_{\alpha,\beta}(y)=\sum\limits_{k=0}^\infty\frac{(-1)^ky^{k}}{\Gamma(\alpha k+\beta)}$$ is the Mittag-Leffler function.
\end{itemize}
\subsubsection{Examples of (B)}
Note that different operators satisfying condition (B) were previously studied in \cite{Vald1} and \cite{Vald2}. For the convenience of readers, below we give several examples of operators that satisfy (B).
\begin{itemize}
\item Laplace operator $-\Delta=-\sum\limits_{j=1}^n\frac{\partial^2}{\partial x_j^2}.$ Laplace operator with Dirichlet conditions satisfies the classical Poincar\'{e} inequality $$(-\Delta v, v)=\|\nabla v\|^2_{L^2(\Omega)}\geq \lambda_1\|v\|^2_{L^2(\Omega)},$$ which is a particular case of inequality \eqref{Lineq}. Here $\lambda_1>0$ is a first eigenvalue of Laplace-Dirichlet operator.
\item Fractional Laplace \begin{equation*}(-\Delta)^sv(x)=C_{n,s}\,\text{P.V.}\int_{\mathbb{R}^n}\frac{v(x)-v(y)}{|x-y|^{n+2s}}dy,
\end{equation*} where \begin{equation*}C_{n,s}=\frac{s2^{2s}}{\pi^{\frac{n-1}{2}}}\frac{\Gamma(\frac{n+2s}{2})}{\Gamma(\frac{2+1}{2})\Gamma(1-s)}\end{equation*} is a normalization constant and “P.V.” is an abbreviation for “in the principal value  sense”.

For the fractional Laplace, the fractional Poincar\'{e} inequality holds \cite{DiN}.
\item Differential operator with involution $$\mathcal{L}[v]=-v''(x)+\varepsilon v''(1-x),\,x\in(0,1),$$ where $|\varepsilon|<1.$ Multiplying it by $v$ and integrating by parts, we have
    \begin{align*}\int\limits_0^1\mathcal{L}[v]vdx=&\int\limits_0^1|v'(x)|^2dx+\varepsilon\int\limits_0^1v'(x) v'(1-x)dx\\&\geq \int\limits_0^1|v'(x)|^2dx-|\varepsilon|\left|\int\limits_0^1v'(x) v'(1-x)dx\right|\\& \geq \|v'\|^2_{L^2(0,1)}-|\varepsilon|\|v'\|_{L^2(0,1)}\left(\int\limits_0^1 |v'(1-x)|^2dx\right)^{\frac{1}{2}}\\&=(1-|\varepsilon|)\|v'\|^2_{L^2(0,1)}\geq  (1-|\varepsilon|)\pi^2\|v\|^2_{L^2(0,1)},\end{align*} thanks to the H\"{o}lder and Poincar\'{e} inequalities.
\item Fractional Sturm-Liouville operator $$\mathcal{L}[v]=D^{\alpha}_{b-}\partial^\alpha_{a+}v,\,\alpha\in(0,1),$$ where $$D^{\alpha}_{b-}v=-\partial_x\int\limits_x^b\frac{(\xi-x)^{-\alpha}}{\Gamma(1-\alpha)}v(\xi)d\xi\,\,\,\text{and}\,\,\, \partial^\alpha_{a+}v=\int\limits_a^x\frac{(x-\xi)^{-\alpha}}{\Gamma(1-\alpha)}v'(\xi)d\xi$$ are the right Riemann-Liouville and the left Caputo space fractional derivatives, respectively.

    Similarly to the previous one, we have
    \begin{align*}\int\limits_a^b\mathcal{L}[v]vdx&=\int\limits_a^b|\partial^\alpha_{a+}v(x)|^2dx\geq C \|v\|^2_{L^2(a,b)},\,\,C>0,\end{align*}
    thanks to the fractional integration by parts formula and fractional Sobolev-Poincar\'{e} inequality (see \cite{Torebek}).
\end{itemize}
\subsubsection{Examples of (C)}
\begin{itemize}
\item Fisher-KPP-type nonlinearity $$f(v)=v(v-1),$$ which is reduced to the Fisher-KPP nonlinearity $f(w)=w(1-w)$ using the transformation $w\hookrightarrow 1-v.$ Also one can consider a more general function $$f(v)=|v|^{q-1}v(|v|^{p-1}v-1),$$ where $p,q>1.$
\item Logarithmic nonlinearity $$f(v)=v(v-1)\log(1+|v|);$$
\item Exponential nonlinearities $$f(v)=(e^{v}-1)(e^{v}-e),$$ and $$f(v)=v(e^{v-1}-1),$$
\item Hyperbolic nonlinearities $$f(v)=(v-1)\sinh v,$$ and $$f(v)=v\tanh(v-1).$$
\end{itemize}
\subsection{Applications in physics}\label{ApplPhys} The main motivation for studying nonlocal in time reaction-diffusion equations \eqref{1}, comes from physics.

In particular, if $$k(t)=g_{1-\alpha}(t)=\frac{t^{-\alpha}}{\Gamma(1-\alpha)},\,\,\alpha\in(0,1),$$
then the model equation \eqref{1} with the time-fractional derivative describes the process of anomalous diffusion, dynamic processes in materials with memory, diffusion in fluids in porous media with memory et al. More details can be found in a number of sources (see for example \cite{Diet, Hilfer, Main, Metz1, Prus, Tarasov1, Tarasov2}).

For the convenience of readers, below we give an example of describing the process of anomalous diffusion using model \eqref{1}. Consider the following toy model of the influence of time and space derivatives on the diffusion process
$$\partial_t(g_{1-\alpha}\ast (h-h_0))(t)+(-\Delta)^sh=0,$$ where $0<\alpha<1,$ $h=h(x,t),$ $h_0=h(x,0)$ and $(-\Delta)^s,\,0<s<1$ is the fractional power of Laplace operator. The mean squared displacement corresponding to this equation is given by
$$\langle x^{2s}\rangle\sim t^{\alpha}.$$
As observed, the mean squared displacement demonstrates power law behavior. The sub-diffusive processes arise from $\alpha<s,$ while the super-diffusive processes arise from $\alpha+>s.$ The normal diffusive processes arise from $\alpha=s=1.$

Let us take another example, let $$k(t)=\int\limits_0^1g_\alpha(t)d\alpha,$$ that is, consider the equation \eqref{1} with distributed order time-derivative. The mean squared displacement corresponding to this equation will have a logarithmic growth, and in this case \eqref{1} describes ultraslow diffusion process (see \cite{Kochubei1, Luch1}).

\subsection{Historical background}\label{HisBack} In recent years, the equation \eqref{1} with an integro-differential operator $\partial_t (k\ast \cdot)$ in time has been studied more often. Obviously, on the one hand, this is due to the numerous physical applications of equation \eqref{1}, as we have presented in previous subsection \ref{ApplPhys}. On the other hand, this is related from a mathematical point of view, because, as we have shown in subsection \ref{exABC}, operator $\partial_t (k\ast \cdot)$ covers a wide class of previously known or unknown integro-differential operators.

There are numerous works (see for example \cite{Gal,Ke,Kochubei1,Kochubei,Luch1,Luchko1,Prus,Zacher,Zacher1,Zacher2}) devoted to the existence and uniqueness of the solution of problems of type \eqref{1}. Many of them were devoted to linear equations (that is, when $f(u)$ is a linear function in $u$), while others studied equation \eqref{1} on a finite time interval $(0,T)$, or for special cases of $\partial_t (k\ast \cdot)$.

When $k(t)=g_{1-\alpha}(t)=\frac{t^{-\alpha}}{\Gamma(1-\alpha)},\,\,\alpha\in(0,1),$ the problem \eqref{1} is well studied by many authors. The problems closer to \eqref{1} were studied in \cite{AAAKT15, Gal}, where $f$ had Fisher-KPP type non-linearities. For example, in \cite{AAAKT15} was studied the equation $$\partial_t (g_{1-\alpha}\ast(u-u_0))(t)-\Delta u=u(u-1),\,t>0,\,x\in\Omega,$$ and the results of local and global existence, and finite time blow-up solutions were obtained. Gal and Warma in \cite{Gal} considered the abstract equation $$\partial_t (g_{1-\alpha}\ast(u-u_0))(t)+Au=f(u),$$ where $A$ is a closed linear operator with domain $D(A)$ in a Banach space $X,$ and $f(u),\,\,f(0)=f(1)=0$ is a locally Lipschitz function. The existence of a strong solution has been proven, and several \textbf{open questions} related to the properties of the solutions have been proposed; in particular, it was proposed to prove finite time blow-up of solutions.

Below we briefly dwell on some works, where was studied the equations of the form \eqref{1}, for general cases of $\partial_t (k\ast \cdot)$.

Kochubei in \cite{Kochubei} considered the homogeneous equation $$\partial_t (k\ast(u-u_0))-\Delta u=0,\,x\in \mathbb{R}^n$$ and studied some properties of its fundamental solutions. In \cite{Zacher1}, Vergara and Zacher considered the above problem in the bounded domain $\Omega\subset\mathbb{R}^n,$ and proved the optimal decay estimates of the weak solutions.

In \cite{Zacher2}, Vergara and Zacher studied equation
$$\partial_t (k\ast(u-u_0))-\text{div}(A(t,x)\nabla u)=f(u),\,x\in \Omega,$$ with Cauchy-Dirichlet conditions \eqref{2}-\eqref{3},
and obtained a number of results, including stability, instability, and blow-up of the weak solution. Here $A(t,x)\in L^\infty((0,T)\times\Omega)$ and $f(u)$ is a locally Lipschitz function.

Luchko and Yamamoto \cite{Luchko1} proved the maximum principle for equation $$\partial_t (k\ast(u-u_0))(t)=\sum\limits_{i,j=1}^na_{i,j}(x)\frac{\partial^2u}{\partial x_i\partial x_j}+\sum\limits_{i=1}^nb_{i}(x)\frac{\partial u}{\partial x_i}u-q(x)u+f(t,x),\,(0,T]\times\Omega,$$ where $a_{i,j}(x)=a_{j,i}(x),$ $b_i(x),$ $q(x)$ and $f(t,x)$ are continuous functions. Also in \cite{Luchko1} an \textbf{open question} was discussed regarding the asymptotic behavior of the solution for large time.

In \cite{Ke}, Ke et al. considered problem $$\partial_t (k\ast(u-u_0))+\mathcal{L}u=f(u),$$ for more general self-adjoint abstract unbounded operator $\mathcal{L},$ and proved the existence of strong solution on the time interval $(0,T),\,\,T<\infty.$

All the above works motivated us to consider problem \eqref{1},\eqref{2},\eqref{3}, and we will try to partially answer some open questions posed in \cite{Gal} and \cite{Luchko1}.

\section{Main results}\label{Main}
In this section, we present the main results and some particular cases of them.

Let $V_s(\Omega)=D(\mathcal{L}^s),$ where $$D(\mathcal{L}^s)=\left\{v=\sum\limits_{k=1}^\infty v_ke_k:\,\,\,\sum\limits_{k=1}^\infty\lambda_k^{2s}v_k^2<\infty\right\}.$$ Here $\left\{e_k\right\}_{k=1}^\infty$ is the system of orthonormal eigenfunctions of $\mathcal{L},$ and $\lambda_k>0,\,k=1,2,...$ is the eigenvalue corresponding to the $e_k.$
Then the space $V_s(\Omega)$ equipped with the norm
$$\|v\|_{V_s(\Omega)}=\|\mathcal{L}^sv\|_{L^2(\Omega)}=\left(\sum\limits_{k=1}^\infty\lambda_k^{2s}v_k^2\right)^{\frac{1}{2}}$$
is the Banach space (see \cite{Ke}).

Below we formulate a statement about the existence of local in time solutions.
\begin{theorem}\label{Th2} Let the hypotheses (A), (B) and (C) be satisfied. Assume that $u_0\in V_{1/2}(\Omega),$ then there exists $T^*>0$ such that the problem \eqref{1},\eqref{2},\eqref{3} has a unique strong solution $$u\in C([0,T^*], V_{1/2}(\Omega))\cap C^\gamma([\delta,T^*], L^2(\Omega)),\,\, \gamma\in(0,1), 0<\delta<T^*.$$ In addition, if $0\leq u_0(x)\leq 1,\,x\in\bar\Omega,$ then $$0\leq u(x,t)\leq 1$$ for $(x,t)\in \bar\Omega\times[0,T^*].$
\end{theorem}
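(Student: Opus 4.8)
The plan is to obtain the local strong solution by a fixed-point argument built on the known linear theory for the equation $\partial_t(k\ast(v-v_0))+\mathcal{L}_x v=g$, and then to extract the invariant region $0\le u\le 1$ from a maximum-principle/comparison argument adapted to the nonlocal operator. First I would recast the semilinear problem in mild/variational form: writing $S(t)$ and $P(t)$ for the resolvent families associated with the kernel pair $(k,l)$ from hypothesis (A) and the operator $\mathcal{L}_x$ from (B) (these exist because $\mathcal{L}_x$ is self-adjoint with positive discrete spectrum, so one can diagonalize in the eigenbasis $\{e_k\}$ and use the scalar relaxation functions solving $\partial_t(k\ast(s_k-1))+\lambda_k s_k=0$), the solution should satisfy
\begin{equation*}
u(t)=S(t)u_0+\int_0^t P(t-\tau)\,f(u(\tau))\,d\tau .
\end{equation*}
I would then define the map $\Phi$ sending $v\mapsto$ the right-hand side above on the ball $B=\{v\in C([0,T^*],V_{1/2}(\Omega)):\|v-u_0\|\le R\}$. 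Using the smoothing/boundedness estimates for $S(t)$ and $P(t)$ in the $V_{1/2}$-norm (which follow from the spectral representation and properties of the relaxation functions — monotonicity and the $(k\ast l)=1$ identity), together with the local Lipschitz bound on $f$ over the bounded set of values taken by elements of $B$, I would show $\Phi$ maps $B$ into itself and is a contraction provided $T^*$ is small enough. This yields the unique strong solution in $C([0,T^*],V_{1/2}(\Omega))$; the extra Hölder regularity $C^\gamma([\delta,T^*],L^2(\Omega))$ comes from the standard parabolic-type smoothing of $S$ and $P$ away from $t=0$, i.e. from the decay/continuity estimates on the scalar resolvent functions and their time derivatives, applied termwise in the eigenexpansion.

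For the second assertion, suppose $0\le u_0\le 1$ on $\bar\Omega$. I would prove $u\ge 0$ and $u\le 1$ separately by testing the equation against the negative/positive parts. For $u\ge 0$: set $w=u^-=\max(-u,0)$, multiply \eqref{1} by $-w$, integrate over $\Omega$, and use hypothesis (B) (the coercivity \eqref{Lineq} gives $-\int_\Omega w\,\mathcal{L}_x u = -\int_\Omega w\,\mathcal{L}_x(-w)+(\text{cross term})$, and on $\{u<0\}$ one has $u=-w$ so the principal term is sign-definite), together with the fundamental lemma for nonlocal derivatives of the form $v\,\partial_t(k\ast v)\ge \tfrac12\partial_t(k\ast v^2)$ (valid for nonnegative nonincreasing $k$ — this is the Zacher/Vergara inequality), and the sign condition (C): $f(y)>0$ for $y<0$ forces $-w f(u)=-w f(-w)\le 0$ on $\{u<0\}$. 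Assembling these gives $\partial_t(k\ast \|u^-(\cdot,t)\|_{L^2}^2)\le 0$ in a suitable averaged sense, and since $u_0^-=0$, inverting with the kernel $l$ (using (A)) forces $u^-\equiv 0$. For $u\le 1$: apply the same scheme to $z=(u-1)^+$, using that $\mathcal{L}_x[1]$ is controlled (for the Laplacian-type examples $\mathcal{L}_x$ annihilates constants up to boundary effects, and in the abstract setting one works with $u-1$ extended suitably, invoking the Dirichlet condition \eqref{3}), and that $f(y)<0$ for $y\in(0,1)$ while $f(1)=0$, so the reaction term has the favorable sign on $\{u>1\}$; again conclude $z\equiv 0$.

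The main obstacle I anticipate is the maximum-principle step rather than the fixed point: making the test-function computation rigorous requires that $u$ be regular enough in time to justify the nonlocal chain inequality $u\,\partial_t(k\ast(u-u_0))\ge \tfrac12\partial_t(k\ast(u^2-u_0^2))$ and its variants for $u^-,(u-1)^+$ — this needs the strong-solution regularity just established, plus an approximation argument (e.g. Yosida-type regularization of $k$, or working with the Galerkin truncations in the eigenbasis and passing to the limit). A secondary delicate point is handling $\mathcal{L}_x[1]$ and the interaction with the Dirichlet condition \eqref{3} in the upper bound $u\le 1$, since in the abstract framework (B) constants need not lie in $D(\mathcal{L})$; the clean fix is to test with $(u-1)^+$, note this function vanishes on $\mathbb{R}^n\setminus\Omega$ by \eqref{3} and on $\{u\le 1\}$, and use coercivity \eqref{Lineq} directly on the truncation together with the fact that $\int_\Omega (u-1)^+\mathcal{L}_x u\,dx\ge C_\mathcal{L}\|(u-1)^+\|_{L^2}^2$ can be derived from the spectral form of (B). Once these technical points are in place, inverting the Sonine kernel as in (A) closes both one-sided bounds and hence the theorem.
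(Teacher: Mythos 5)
Your overall architecture matches the paper's: the local existence and regularity are obtained exactly as you sketch (the paper simply cites \cite{Ke}, where the mild formulation with resolvent families generated by the pair $(k,l)$ and $\mathcal{L}_x$ and a contraction argument are carried out), and the invariant region is extracted by testing with truncations, using the Vergara--Zacher fundamental inequality \eqref{Lp} and a comparison principle for $\partial_t(k\ast\cdot)$. Your lower bound $u\ge 0$ is sound: on $\{u<0\}$ hypothesis (C) gives $f(u)>0$, so the tested reaction term $\int_{\{u<0\}} u\,f(u)\,dx$ is nonpositive, and inverting the Sonine pair (or invoking the comparison principle with supersolution $0$) closes that half.

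However, your upper bound $u\le 1$ contains a genuine error: you claim that ``the reaction term has the favorable sign on $\{u>1\}$,'' citing $f(y)<0$ on $(0,1)$. But on $\{u>1\}$ hypothesis (C) gives $f(u)>0$, so testing with $z=(u-1)^+\ge 0$ produces $\int_\Omega z\,f(u)\,dx\ge 0$ on the right-hand side --- the \emph{unfavorable} sign. Indeed, it is precisely this positivity of $f$ above $1$ that drives the finite-time blow-up in part (iii) of Theorem \ref{Th1}, so no pure sign argument can yield $u\le 1$. The correct mechanism, and the one the paper uses, is the local Lipschitz continuity of $f$ together with the equilibrium condition $f(1)=0$: by the mean value theorem $f(u)\le C_1\,(u-1)^+$ on the bounded range of $u$, so after applying \eqref{Lineq} and \eqref{Lp} the tested inequality reduces to $\partial_t(k\ast w)(t)\le C_1 w(t)$ with $w(t)=\|(u-1)^+(\cdot,t)\|_{L^2(\Omega)}$ and $w(0)=0$, and the comparison (nonlocal Gronwall) principle of \cite[Lemma 2.5]{Zacher1} then forces $w\equiv 0$. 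Without this Lipschitz--Gronwall step your argument for $u\le 1$ does not close. Your secondary worry about $\mathcal{L}_x[1]$ is legitimate and is resolved exactly as you suggest, by working with the truncation directly; the missing ingredient is the treatment of the reaction term.
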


\begin{remark}
Some of the results of Theorem \ref{Th2} are direct consequences of the results from \cite{Ke}. However, since we have special $f(u)$ from class (C), the second part of Theorem \ref{Th2} is new and important for obtaining further results regarding global solutions.
\end{remark}

\begin{theorem}\label{Th1} Let hypotheses (A), (B) and (C) hold. Suppose that $u_0\in V_{1/2}(\Omega).$
\begin{description}
\item[(i)]
Let $0\leq u_0(x)\leq 1,\,\,u_0\not\equiv 0,\,\,x\in\bar\Omega.$ Then the problem \eqref{1} with the initial-boundary conditions \eqref{2}-\eqref{3} admits a global strong solution $$u\in C([0,\infty),V_{1/2}(\Omega))\cap C^\gamma([\delta,\infty), L^2(\Omega)), \,\gamma\in(0,1), 0<\delta<T<+\infty,$$ which satisfies
$$0\leq u(x,t)\leq 1$$ for $(x,t)\in \bar\Omega\times[0,\infty);$
\item[(ii)] Assume that $0\leq u_0(x)\leq 1,\,\,u_0\not\equiv 0,\,\,x\in\bar\Omega$ and let $\|u_0\|_{L^2(\Omega)}\neq 0.$ Then the global strong solution $0 \leq u\leq 1$ of problem \eqref{1} supplemented with the initial-boundary conditions \eqref{2} and \eqref{3} satisfies the estimate \begin{equation}\label{7*}\|u(t,\cdot)\|_{L^2(\Omega)}\leq \frac{1}{1+C_{\mathcal{L}}(1\ast l)(t)}\|u_0\|_{L^2(\Omega)},\,\,\,t\geq 0,\end{equation} where $C_\mathcal{L}$ is the positive constants of inequality \eqref{Lineq} in (B).

    In addition, if $l\not\in L^1(\mathbb{R}),$ then
    \begin{equation*}\|u(t,\cdot)\|_{L^2(\Omega)}\rightarrow 0,\,\,\text{at}\,\,\,t\rightarrow+\infty.\end{equation*}
\item[(iii)] Let $\lambda_1>0$ is a first eigenvalue and $\phi_1(x)>0$ is a corresponding
 eigenfunction of operator $\mathcal{L}$ with Dirichlet boundary conditions and let $f: \mathbb{R}_+ \rightarrow \mathbb{R}$ is a convex function. Assume that there exists $m > 1$ such that
    \begin{equation}\label{7**}\int\limits_m^\infty\frac{dy}{f(y)}<\infty.\end{equation}
Then, there exists $M = M(\lambda_1, f, l) > 0$ such that for any $u_0\geq 0,\,u_0\not\equiv 0$ satisfying \begin{equation}\label{7***}\int\limits_\Omega u_0(x)\phi_1(x)dx\geq M,\end{equation} the strong solution $u$ of problem \eqref{1}-\eqref{3} blows up in finite time.
\end{description}
\end{theorem}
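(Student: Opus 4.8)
The plan is to base all three parts on the local well-posedness of Theorem \ref{Th2} and on two standard facts for Sonine kernels satisfying (A): first, the \emph{relaxation function} $s_\mu$, i.e. the solution of $\partial_t(k\ast(s_\mu-1))+\mu s_\mu=0$, $s_\mu(0)=1$, is nonnegative, nonincreasing and bounded by $1$, and scalar Volterra (in)equalities admit a comparison principle against it; second, the \emph{convexity inequality}: for $k$ nonnegative and nonincreasing and $H$ convex,
$$H'(v(t))\,\partial_t\big(k\ast(v-v(0))\big)(t)\ \ge\ \partial_t\big(k\ast(H(v)-H(v(0)))\big)(t),$$
valid both pointwise and in a Hilbert space. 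Throughout I will use that convolving an inequality $\partial_t(k\ast(w-w(0)))\ge g$ with $l$ yields $w(t)-w(0)\ge(l\ast g)(t)$ (and likewise with $\le$), because $l\ge0$ and $l\ast\partial_t(k\ast w)=w$ when $w(0)=0$. For part (i) I would argue by continuation: by Theorem \ref{Th2} the solution exists on a maximal interval $[0,T_{\max})$ with $0\le u\le1$, hence $\|f(u(\cdot,t))\|_{L^2(\Omega)}\le L_f|\Omega|^{1/2}$ with $L_f=\max_{[0,1]}|f|$; testing \eqref{1} with $\mathcal{L}_x u$, using (B), the Hilbert-space convexity inequality with $H=\tfrac12\|\mathcal{L}_x^{1/2}\cdot\|_{L^2}^2=\tfrac12\|\cdot\|_{V_{1/2}}^2$, Young's inequality to absorb $\langle f(u),\mathcal{L}_x u\rangle$, and then convolving with $l$, gives $\|u(\cdot,t)\|_{V_{1/2}}^2\le\|u_0\|_{V_{1/2}}^2+L_f^2|\Omega|\,(1\ast l)(t)$. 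This is finite on every bounded interval, so the $V_{1/2}$-norm cannot blow up; together with the $C^\gamma([\delta,T],L^2)$ regularity this lets one restart Theorem \ref{Th2} at any time $<T_{\max}$, forcing $T_{\max}=\infty$, and the bounds $0\le u\le1$ pass to the limit.

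For part (ii) I would test \eqref{1} with $u(\cdot,t)$. Hypothesis (C) together with $0\le u\le1$ gives $u(x,t)f(u(x,t))\le0$ pointwise, hence $\langle f(u),u\rangle\le0$; (B) gives $\langle\mathcal{L}_x u,u\rangle\ge C_{\mathcal L}\|u\|_{L^2}^2$; and the convexity inequality with $H=\|\cdot\|_{L^2}$ gives $\langle\partial_t(k\ast(u-u_0)),u\rangle\ge\|u\|_{L^2}\,\partial_t\big(k\ast(\|u\|_{L^2}-\|u_0\|_{L^2})\big)$. Since $\|u_0\|_{L^2}\ne0$ and the solution does not vanish identically in finite time (comparison with the linearization at $u=0$, possible because $f$ is locally Lipschitz with $f(0)=0$), one may divide by $\|u(\cdot,t)\|_{L^2}>0$ to obtain, for $y(t)=\|u(\cdot,t)\|_{L^2}$, the scalar inequality $\partial_t(k\ast(y-y_0))(t)+C_{\mathcal L}\,y(t)\le0$ with $y(0)=\|u_0\|_{L^2}$. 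Comparison then yields $y(t)\le y_0\,s_{C_{\mathcal L}}(t)$. Finally, from $s_\mu=1-\mu(l\ast s_\mu)$ and $(l\ast s_\mu)(t)\ge(1\ast l)(t)\,s_\mu(t)$ (valid because $s_\mu$ is nonincreasing and $l\ge0$) one gets $s_\mu(t)\le\big(1+\mu(1\ast l)(t)\big)^{-1}$, which is exactly \eqref{7*}; and if $l\notin L^1(\mathbb R_+)$ then $(1\ast l)(t)\to\infty$, so $\|u(\cdot,t)\|_{L^2}\to0$.

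For part (iii) I would use the eigenfunction (Kaplan) method adapted to the nonlocal operator. First, $u\ge0$ by comparison with the zero solution ($u_0\ge0$, $f(0)=0$). Normalize $\phi_1>0$ so that $\int_\Omega\phi_1=1$ and set $U(t)=\int_\Omega u(\cdot,t)\phi_1$. Multiplying \eqref{1} by $\phi_1$, integrating over $\Omega$, using self-adjointness of $\mathcal{L}_x$ (so $\int_\Omega\mathcal{L}_x u\,\phi_1=\lambda_1 U$) and Jensen's inequality ($f$ convex, $\phi_1\,dx$ a probability measure) gives
$$\partial_t\big(k\ast(U-U_0)\big)(t)\ \ge\ f(U(t))-\lambda_1 U(t)=:F(U(t)),\qquad U(0)=U_0=\int_\Omega u_0\phi_1\ \ge\ M.$$
Condition \eqref{7**} (with $f>0$ on $(1,\infty)$, convex, $f(0)=0$) forces $f$ to grow superlinearly, so there is $M_0\ge m$ with $F>0$, $F$ nondecreasing and $F\ge\tfrac12 f$ on $[M_0,\infty)$; hence $G(y):=\int_y^\infty\frac{d\tau}{F(\tau)}$ is finite, positive, decreasing and convex on $[M_0,\infty)$, with $G(y)\to0$ as $y\to\infty$. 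Convolving the display with $l$ gives $U(t)\ge U_0+(l\ast F(U))(t)$; since $U_0\ge M>M_0$ and $F>0$ on $[M_0,\infty)$, a bootstrap shows $U$ is nondecreasing and $\ge U_0$ on its interval of existence. Now apply the convexity inequality with $H=G$: since $G'(U)=-1/F(U)<0$ while $\partial_t(k\ast(U-U_0))\ge F(U)>0$, multiplying reverses the inequality and
$$\partial_t\big(k\ast(G(U)-G(U_0))\big)(t)\ \le\ G'(U(t))\,\partial_t\big(k\ast(U-U_0)\big)(t)\ \le\ G'(U(t))F(U(t))=-1.$$
Convolving with $l$ once more gives $G(U(t))\le G(U_0)-(1\ast l)(t)$. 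Choose $M=M(\lambda_1,f,l)$ so large that $G(M)<\lim_{t\to\infty}(1\ast l)(t)=\|l\|_{L^1}\in(0,\infty]$ (possible since $G(M)\to0$ as $M\to\infty$; automatic when $l\notin L^1$): then $(1\ast l)(T^*)=G(U_0)$ for some finite $T^*$, and the estimate would force $G(U(T^*))\le0$, contradicting $G>0$ on $[M_0,\infty)$ and $U(T^*)\ge U_0$ finite. Hence the solution cannot be continued up to $T^*$, i.e. the maximal existence time satisfies $T_{\max}\le T^*$ and $u$ blows up in finite time.

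The hard part is the closing argument of (iii): one must apply the convexity inequality to the \emph{decreasing} convex function $G=\int_{\cdot}^{\infty}dy/F(y)$ (so that it reverses and produces the absorbing constant $-1$), and one must make ``$M$ large'' quantitative, so that the tail mass $G(M)$ is dominated by the total mass $\|l\|_{L^1}$ of $l$ --- this is precisely what lets $(1\ast l)(t)$ overtake $G(U_0)$ in finite time, and it reduces to the classical blow-up time $\int_{U_0}^{\infty}dy/F(y)$ when $(k,l)=(\delta,1)$. Secondary technical points are the scalar comparison principles for the Volterra (in)equalities in (ii)--(iii) and the propagation of $0\le u\le1$ and of $u\ge0$, all of which rely only on (A) and the comparison principle underlying Theorem \ref{Th2}.
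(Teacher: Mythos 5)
Your proposal is correct and, at its core, follows the same strategy as the paper: continuation of the local solution of Theorem \ref{Th2} for (i), testing with $u$ plus inequality \eqref{Lp}, hypothesis (B) and the sign condition $uf(u)\le 0$ followed by Volterra comparison for (ii), and the Kaplan eigenfunction method with Jensen's inequality reducing (iii) to the scalar inequality $\partial_t(k\ast(\Phi-\Phi_0))+\lambda_1\Phi\ge f(\Phi)$. The differences are in how the scalar facts are handled. In (ii) the paper simply quotes from Vergara--Zacher that the solution of $W+C_{\mathcal L}(l\ast W)=1$ obeys $W(t)\le(1+C_{\mathcal L}(1\ast l)(t))^{-1}$, whereas you rederive this from $(l\ast s_\mu)(t)\ge(1\ast l)(t)s_\mu(t)$; you also explicitly justify dividing by $\|u(t)\|_{L^2}$, a point the paper passes over. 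In (iii) the paper stops at the scalar inequality and cites \cite{Zacher2} for finite-time blow-up under \eqref{7**}--\eqref{7***}; you instead give a self-contained proof by applying the convexity (fundamental) inequality to the decreasing convex function $G(y)=\int_y^\infty d\tau/F(\tau)$ and convolving with $l$ to get $G(U(t))\le G(U_0)-(1\ast l)(t)$ --- this is essentially the argument inside the cited reference, and making $M$ quantitative via $G(M)<\|l\|_{L^1(\mathbb{R}_+)}$ is exactly the right condition. In (i) the paper iterates the local existence argument using only the propagated bound $0\le u(\cdot,T^*)\le 1$, while you additionally derive the a priori bound $\|u(t)\|_{V_{1/2}}^2\le\|u_0\|_{V_{1/2}}^2+L_f^2|\Omega|(1\ast l)(t)$ by testing with $\mathcal L_x u$; this is a useful strengthening, since the paper's bare iteration tacitly needs the restart data to stay bounded in $V_{1/2}$ (so that the local existence times do not degenerate), which your estimate supplies. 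All remaining gaps (regularity needed for the convexity inequality, the blow-up alternative for the strong solution) are at the same level of rigor as the paper itself.
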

\begin{remark}
Part (i) of Theorem \ref{Th1} is an extension of some previously known results:
\begin{itemize}
\item When $$k(t)=g_{1-\alpha}(t)=\frac{t^{-\alpha}}{\Gamma(1-\alpha)},\alpha\in(0,1),\,\,\mathcal{L}=-\Delta,\,\,f(u)=u(u-1),$$ then it coincides with the results of Ahmad et al. \cite{AAAKT15};
\item When $$k(t)=g_{1-\alpha}(t),\alpha\in(0,1),\,\,\mathcal{L}=(-\Delta)^s,\,s\in(0,1),\,\,f(u)=u(u-1),$$ then it coincides with the results of Alsaedi et al. \cite{Kir2};
\item When $k(t)=g_{1-\alpha}(t),\alpha\in(0,1),\,\,$ then it coincides with the results of Gal and Warma \cite{Gal}.
\end{itemize}
\end{remark}
\begin{remark}\label{rem2} Part (ii) of Theorem \ref{Th1} gives a partial answer to an open question by Luchko and Yamamoto \cite[Section 5]{Luchko1} regarding the asymptotic behavior of the solution of the diffusion equation with general $\partial_t (k\ast \cdot)$.
Also, part (ii) of Theorem \ref{Th1} extends some previously known or unknown results. In particular:
\begin{itemize}
\item Let $k(t)=g_{1-\alpha}(t)$ and $l(t)=g_\alpha(t),$ then we have logarithmic decay (see \cite{AAAKT15, Kir2, Zacher1})
$$\|u(t,\cdot)\|_{L^2(\Omega)}\leq \frac{1}{1+C_{\mathcal{L}}t^\alpha}\|u_0\|_{L^2(\Omega)},\,\,\,t\geq 0;$$
\item Let $k(t)=\sum\limits_{j=1}^mg_{1-\alpha_j}(t),\,0<\alpha_m<...<\alpha_2<\alpha_1<1,$ then we have (see \cite{Yam2, Zacher1})
$$\|u(t,\cdot)\|_{L^2(\Omega)}\leq \frac{C}{1+t^{\alpha_m}}\|u_0\|_{L^2(\Omega)},\,\,C>0,\,t\geq T_1>0;$$
\item Let $k(t)=\int\limits_0^1g_\alpha(t)d\alpha,\, l(t)=\int\limits_0^\infty \frac{e^{-st}}{1+s}ds,\,t>0,$ then we have (see \cite{Luch1, Zacher1})
$$\|u(t,\cdot)\|_{L^2(\Omega)}\leq \frac{1}{1+C_{\mathcal{L}}\log t}\|u_0\|_{L^2(\Omega)},\,\,\,t\geq T_1>0;$$
\item Let $k(t)=g_{1-\alpha}(t)e^{-\mu t},\, l(t)=g_\alpha(t)e^{-\mu t}+\mu\int\limits_0^tg_\alpha(s)e^{-\mu s}ds,\,t>0,$ then we have exponential decay (see \cite{Zacher1})
$$\|u(t,\cdot)\|_{L^2(\Omega)}\leq e^{-Ct}\|u_0\|_{L^2(\Omega)},\,\,\,t\geq 0,\,C>0.$$
\end{itemize}
Note that all of the above examples belong to the class $l\not\in L^1(\mathbb{R}_+)$. When $l\in L^1(\mathbb{R}_+)$, then the solution does not decaying, but will be bounded $\|u(t,\cdot)\|_{L^2(\Omega)}\leq C,\,C>0,\, t\geq 0.$
\end{remark}
\begin{remark}\label{rem3}
If $k(t)=g_{1-\alpha}(t),\alpha\in(0,1),\,\,$ then the part (iii) of Theorem \ref{Th1} gives a partial answer to an open question posed by Gal and Warma (see \cite[Chapter 5, Problem 3(c)]{Gal}) regarding the finite time blow-up of the solution of problem \eqref{1},\eqref{2},\eqref{3}. When $\mathcal{L}=-\Delta,$ the above results coincides with the results of Vergara and Zacher \cite{Zacher1}. In the case when $\mathcal{L}=-\Delta$ \cite{AAAKT15} or $\mathcal{L}=(-\Delta)^s, s\in(0,1)$ \cite{Kir2} and $$k(t)=g_{1-\alpha}(t),\alpha\in(0,1),\,\,\,f(u)=u(u-1),$$ it is possible to obtain upper and lower estimate of the blow-up time in the form
\begin{equation*}\left(\frac{\Gamma(\alpha+1)}{4(c_0+1/2)}\right)^{\frac{1}{\alpha}}\leq T^*\leq \left(\frac{\Gamma(\alpha+1)}{c_0}\right)^{\frac{1}{\alpha}},\end{equation*} where $c_0=\int\limits_\Omega u_0(x)\phi_1(x)dx,$ $\phi_1>0$ is the first eigenfunction of Dirichlet-Laplacian.
\end{remark}

\section{Proof of main results} \label{sec3}

For the proof of our result in this section, we will make essential use of the following inequality proved in \cite{Zacher1}
\begin{equation}\label{Lp}
\|u(t)\|_{L^2(\Omega)}\partial_{t}\left(k\ast\left(\|u(\cdot)\|_{L^2(\Omega)}-\|u_0\|_{L^2(\Omega)}\right)\right)(t) \leq\int\limits_{\Omega}u\partial_{t}\left(k\ast\left(u-u_0\right)\right)(t)dx.
\end{equation}

\begin{proof}[Proof of Theorem \ref{Th2}] The existence of a unique strong local solution follows from \cite{Ke}. In \cite{Ke}, was studied an abstract nonlocal evolution equation in the form
\begin{equation} \label{aee}\partial_t (k\ast(u-u_0))+Au=f(u),\,\,\,\,t>0,\end{equation}
where $A$ is an unbounded linear operator on separable Hilbert space $H.$ In particular, for locally Lipschitz $f(u)$, the existence of a unique strong solution to problem \eqref{aee} was proved.

Let $0\leq u_0(x)\leq 1,\,x\in\bar\Omega,$ we will prove $0\leq u(x,t)\leq 1$ for $(x,t)\in \bar\Omega\times[0,T^*].$ Firstly, we show that $u\ge 0.$ Let $$\tilde{u}:=\min \left( u,0 \right),$$ hence $$\tilde{u}_0:=\min \left(u_0,0 \right)=0.$$ Multiplying both sides of equation \eqref{1} by $\tilde{u}$ and integrating over $\Omega$, we obtain
\begin{align*}\int\limits_{\Omega}\partial_t (k\ast(\tilde{u}))\tilde{u}dx+\int\limits_{\Omega}\mathcal{L}_x[\tilde{u}]\tilde{u}dx =\int\limits_{\Omega}\tilde{u}f\left(\tilde{u}\right)dx.\end{align*}
Using the hypothesis (B) and inequality \eqref{Lp} to the left hand side of last equality we have
\begin{align*}\|\tilde{u}(t)\|_{L^2(\Omega)}\partial_{t}\left(k\ast\left(\|\tilde{u}(\cdot)\|_{L^2(\Omega)}\right)\right)(t)+C_\mathcal{L}\|\tilde{u}(t)\|^2_{L^2(\Omega)} \leq\int\limits_{\Omega}\tilde{u}f\left(\tilde{u}\right)dx.\end{align*}
Since $f(v)\geq 0$ for $v\leq 0$ by hypothesis (C), we have
\begin{align*}\partial_{t}\left(k\ast v\right)(t)+C_\mathcal{L}v(t) \leq 0,\end{align*} where $v(t)=\|\tilde{u}(t)\|_{L^2(\Omega)}.$
It is obvious that $0$ is a super solution of the last inequality, then the comparison principle (see \cite[Lemma 2.5]{Zacher1}) implies
$$\|\tilde{u}(t)\|_{L^2(\Omega)}\leq 0, t\in [0,T^*],$$ hence $\tilde{u}\equiv 0$ for all $(x,t)\in\bar\Omega\times[0,T^*).$ Consequently, $u\geq 0$ for $(x,t)\in \bar\Omega\times[0,T^*].$

Now we show that $u\le 1.$ Let us denote $\hat{u}:=\min \left( 1-u,0 \right),$ hence $\hat{u}_0:=\min \left(u_0,0 \right)=0.$ Multiplying both sides of equation \eqref{1} by $\hat{u}$ and integrating over $\Omega$, one obtain
\begin{equation}\label{7}\int\limits_{\Omega}\partial_t (k\ast(\hat{u}))\hat{u}dx+\int\limits_{\Omega}\mathcal{L}_x[\hat{u}]\hat{u}dx =-\int\limits_{\Omega}\hat{u}f\left(\hat{u}\right)dx.\end{equation}
By hypothesis (C) we know that $f(v)>0$ for $v<0,$ then $-\hat{u}f\left(\hat{u}\right)\geq 0.$
It follows from the mean value theorem that $$f\left(\hat{u}\right)=f'\left(\hat{u}^*\right)\hat{u},$$ where $\hat{u}\leq \hat{u}^*\leq 0$ and  $f'\left(\hat{u}^*\right)\leq 0.$
Applying the hypothesis (B) and inequality \eqref{Lp} to \eqref{7} we arrive at
\begin{align*}\|\hat{u}(t)\|_{L^2(\Omega)}\partial_{t}\left(k\ast\left(\|\hat{u}(\cdot)\|_{L^2(\Omega)}\right)\right)(t)+C_\mathcal{L}\|\hat{u}(t)\|^2_{L^2(\Omega)} \leq C_1\|\hat{u}(t)\|^2_{L^2(\Omega)},\,C_1>0,\end{align*} hence
\begin{align*}\partial_{t}\left(k\ast w\right)(t) \leq C_1w(t),\end{align*}
where $w(t)=\|\hat{u}(t)\|_{L^2(\Omega)}$ with $w(0)=0.$
It is easy to see that 0 is a supersolution of last inequality, then
$$\|\hat{u}(t)\|_{L^2(\Omega)}\leq 0,$$ thanks to the comparison principle.
Then $\hat{u}\left(x,t\right)\equiv 0$ for all $(x,t)\in \bar\Omega\times[0,T^*]$, which implies that $u\leq 1.$

Finally we have $0\le u\le 1$ for all $(x,t)\in \bar\Omega\times[0,T^*].$ \end{proof}

\begin{proof}[Proof of Theorem \ref{Th1}] \textbf{(i)}. Let $0\leq u_0(x)\leq 1,\,x\in\bar\Omega.$
Then from Theorem \ref{Th2} it is known that there exists unique local strong solution $u$ of \eqref{1},\eqref{2},\eqref{3}, such that $0\le u\le 1$ for all $(x,t)\in \bar\Omega\times[0,T^*].$

Let us consider the problem \eqref{1},\eqref{3} on $(x,t)\in \Omega\times(T^*,T_1^*],\,T^*<T_1^*,$ with initial data \begin{equation}\label{2*}u(x,T^*)=u_1(x),\,\,\,\,\,x\in\Omega,\end{equation} where $u_1(x)$ is the value of function $u(x,t)$ at the point $T^*.$ It is known that $0\leq u_1(x)\leq 1$ for all $x\in\bar\Omega.$ Then, repeating the process of proving Theorem \ref{Th2}, we make sure that $0\le u\le 1$ for all $(x,t)\in \bar\Omega\times[T^*,T^*_1].$ Repeating this procedure enough times, we arrive at $0\le u\le 1$ for all $(x,t)\in \bar\Omega\times[0,\infty).$

\textbf{(ii).} Let $0\leq u_0(x)\leq 1,\,x\in\bar\Omega.$ Multiplying both sides of equation \eqref{1} by $u$ and integrating over $\Omega$, we obtain
\begin{align*}\int\limits_{\Omega}\partial_t (k\ast({u}-u_0)){u}dx+\int\limits_{\Omega}\mathcal{L}_x[{u}]{u}dx =\int\limits_{\Omega}{u}f\left({u}\right)dx.\end{align*}
Applying the hypothesis (B) and inequality \eqref{Lp} to the left hand side of last expression we have
\begin{align*}\|{u}(t)\|_{L^2(\Omega)}\partial_{t}\left(k\ast\left(\|{u}(\cdot)\|_{L^2(\Omega)}-\|{u}_0\|_{L^2(\Omega)}\right)\right)(t)+C_\mathcal{L}\|{u}(t)\|^2_{L^2(\Omega)} \leq\int\limits_{\Omega}{u}f\left({u}\right)dx.\end{align*}
As $0\leq u_0(x)\leq 1,\,x\in\bar\Omega,$ we have $0\le u\le 1$ for all $(x,t)\in \bar\Omega\times[0,\infty).$ Hence $f(u)\leq 0$ by hypothesis (C), then
\begin{align*}\partial_{t}\left(k\ast (U-1)\right)(t)+C_\mathcal{L}U(t) \leq 0,\end{align*} where $U(t)=\frac{1}{\|{u}_0\|_{L^2(\Omega)}}\|{u}(t)\|_{L^2(\Omega)}$ and $U(0)=\frac{1}{\|{u}_0\|_{L^2(\Omega)}}\|{u}_0\|_{L^2(\Omega)}=1.$

Let us consider the equation
\begin{align*}\partial_{t}\left(k\ast (W-1)\right)(t)+C_\mathcal{L}W(t) = 0,\end{align*} with $W(0)=1.$ Note that the last equation is equivalent to the following Volterra equation (see \cite[Page 213]{Zacher1})
\begin{align*}W(t)+C_\mathcal{L}(l\ast W)(t) = 1.\end{align*}
It is known that the solution of the last integral equation nonincreasing, in addition, it satisfies
\begin{align*}W(t) \leq \frac{1}{1+C_\mathcal{L}(l\ast 1)(t)},\,t\geq 0.\end{align*}
Since $U(t)\leq W(t),\,t\geq 0$ according to the comparison principle, then
\begin{align*}U(t) \leq \frac{1}{1+C_\mathcal{L}(l\ast 1)(t)},\,t\geq 0.\end{align*}
Finally, it gives
\begin{align*}\|{u}(t)\|_{L^2(\Omega)} \leq \frac{\|{u}_0\|_{L^2(\Omega)}}{1+C_\mathcal{L}(l\ast 1)(t)},\,t\geq 0.\end{align*}
If $l\not\in L^1(\mathbb{R}_+)$ in the last expression, then
\begin{align*}\lim\limits_{t\rightarrow+\infty}\|{u}(t)\|_{L^2(\Omega)} \leq \lim\limits_{t\rightarrow+\infty}\frac{\|{u}_0\|_{L^2(\Omega)}}{1+C_\mathcal{L}(l\ast 1)(t)}\rightarrow 0.\end{align*}

\textbf{(iii).} Assume that $u_0\geq 0,\,u_0\not\equiv 0,$ then from the Theorem \ref{Th2} it is known that there exists a local in time strong solution. Multiplying both sides of equation \eqref{1} by $\phi_1(x)>0$ and integrating over $\Omega$ one obtain
\begin{align*}\int\limits_{\Omega}\partial_t (k\ast({u}-u_0))\phi_1(x)dx+\int\limits_{\Omega}\mathcal{L}_x[{u}]\phi_1(x)dx =\int\limits_{\Omega}f\left({u}\right)\phi_1(x)dx.\end{align*}
Integration by parts of the second term of the last expression gives us
\begin{align*}\int\limits_{\Omega}\partial_t (k\ast({u}-u_0))\phi_1(x)dx+\lambda_1\int\limits_{\Omega}{u}\phi_1(x)dx =\int\limits_{\Omega}f\left({u}\right)\phi_1(x)dx.\end{align*}
Since $f(y)$ is a convex function for $y\in\mathbb{R}_+$, by virtue of the Jensen's inequality we have
$$\int\limits_{\Omega}f\left({u}\right)\phi_1(x)dx\geq f\left(\int\limits_{\Omega}u\phi_1(x)dx\right),$$ then
\begin{equation}\label{8}\partial_t (k\ast({\Phi}-\Phi_0))(t)+\lambda_1\Phi(t) \geq f(\Phi(t)),\end{equation}
where $\Phi(t)=\int\limits_{\Omega}{u}\phi_1(x)dx$ and $\Phi(0)\equiv \Phi_0=\int\limits_{\Omega}{u}_0(x)\phi_1(x)dx.$

In \cite{Zacher2}, Vergara and Zacher proved that if conditions \eqref{7**} and \eqref{7***} are satisfied, then the solution to inequality \eqref{8} blows up in finite time. Therefore, this means that the solution to problem \eqref{1},\eqref{2},\eqref{3} also blows up in a finite time.

The proof is complete.
\end{proof}

\section{Discussions about quasilinear extensions}
In this section, we discuss the possibility of extending the main results for \eqref{1},\eqref{2},\eqref{3} when $\mathcal{L}$ is a nonlinear operator $\mathcal{N}$ satisfying the following property
\begin{description}
\item[(B*)] Suppose that $\mathcal{N}_x$ is an unbounded operator with boundary condition \eqref{3} and let there exist $\gamma\in(0,\infty),$ $q>1,$ $C_\mathcal{N}>0$ such that if $u$ is the positive solution of \eqref{1}-\eqref{3}, then
\begin{equation}\label{NLineq}\int\limits_\Omega u^{q-1}(x,t)\mathcal{N}_x[u](x,t)dx\geq C_\mathcal{N}\|u(\cdot,t)\|^{q-1+\gamma}_{L^q(\Omega)},\,t>0.\end{equation}
\end{description}
Hypothesis (B*) have been considered in \cite{Vald1, Vald2} for the study of more general diffusion equations with time-fractional derivatives. As mentioned in \cite{Vald1, Vald2}, typical examples of operators satisfying (B*) are the following:
\begin{itemize}
  \item Laplacian $-\Delta$;
  \item fractional Laplacian $(-\Delta)^s$;
  \item p-Laplacian $\Delta_p=div\left(|\nabla \cdot|^{p-2}\nabla \cdot\right)$;
  \item fractional p-Laplacian;
  \item porous medium operator $-\Delta (\cdot)^m,\,m>1$;
  \item doubly nonlinear operator $div\left(|\nabla (\cdot)^m|^{p-2}\nabla (\cdot)^m\right)$;
  \item mean curvature operator $div\left(\frac{\nabla \cdot}{1+|\nabla \cdot|^2}\right)$;
  \item sum of different space-fractional operators $\sum\limits_{j=1}^m(-\Delta)^{s_j}$;
\end{itemize} and others.

Let us consider the equation
\begin{equation}\label{1*}\partial_t (k\ast(u-u_0))+\mathcal{N}_x [u]=f(u),\,\,\,\, x\in\Omega\subset\mathbb{R}^n, t>0,\end{equation}
with initial-boundary conditions \eqref{2},\eqref{3}.

In this section, we will discuss about the existence of weak solutions to problem \eqref{1*}, \eqref{2}, \eqref{3}.

{\bf Local existence.} Note that the existence of a weak solution on $[0,T]\times\Omega,$ $T<\infty$ were obtained in \cite{Zacher} for more general case of problem \eqref{1*}, \eqref{2}, \eqref{3}. It follows from this that the problem \eqref{1*}, \eqref{2}, \eqref{3} has a local weak solution on $[0,T]\times\Omega,$ for finite $T.$

Repeating the technique of proving Theorem \ref{Th1} and taking into account hypotheses (A), (B*), (C) it is easy to prove that the weak local solution of problem \eqref{1*}, \eqref{2}, \eqref{3} satisfies $$0\leq u(x,t)\leq 1\,\,\,\text{for} \,\,(x,t)\in \bar\Omega\times[0,T],$$ for $0\leq u_0(x)\leq 1,\,x\in\bar\Omega$.

{\bf Global existence.} Taking into account the existence of local weak solutions and using the method of proving Theorem \ref{Th1}, it is easy to show that for $0\leq u_0(x)\leq 1,\,x\in\bar\Omega$ there is a global in time weak solutions to problem \eqref{1*}, \eqref{2}, \eqref{3}, and it satisfies the estimate
$$0\leq u(x,t)\leq 1$$ for all $(x,t)\in \bar\Omega\times\mathbb{R}_+.$

{\bf Decay estimates.} In the general case, it is not easy to obtain decay estimate of solution to problem \eqref{1*}, \eqref{2}, \eqref{3}. Let us illustrate this with the following example. Let $k(t)=g_{1-\alpha}(t)=\frac{t^{-\alpha}}{\Gamma(1-\alpha)},\alpha\in(0,1),$ then the equation \eqref{1*} takes the form
\begin{equation}\label{1**}\partial_t (g_{1-\alpha}\ast(u-u_0))(t)+\mathcal{N}_x [u]=f(u),\,\,\,\, x\in\Omega\subset\mathbb{R}^n, t>0.\end{equation} Assume that $0\leq u_0(x)\leq 1,\,x\in\bar\Omega,$ then $0\leq u(x,t)\leq 1$ for all $(x,t)\in \bar\Omega\times\mathbb{R}_+.$ Multiplying equation \eqref{1*} by $u^{q-1}$ and integrating over $\Omega$ we have
\begin{align*}\int\limits_{\Omega}\partial_t (g_{1-\alpha}\ast({u}-u_0)){u^{q-1}}dx+\int\limits_{\Omega}\mathcal{N}_x[{u}]{u^{q-1}}dx =\int\limits_{\Omega}{u^{q-1}}f\left({u}\right)dx.\end{align*} Since $0\leq u(x,t)\leq 1,$ it follows from (C) that $${u^{q-1}}f\left({u}\right)\leq 0.$$ Then applying (B*) and following inequality (see \cite{Zacher1})
\begin{equation}\begin{split}\label{NLp}
\|u(t)\|^{q-1}_{L^q(\Omega)}&\partial_{t}\left(k\ast\left(\|u(\cdot)\|_{L^q(\Omega)}-\|u_0\|_{L^q(\Omega)}\right)\right)(t) \\&\leq\int\limits_{\Omega}u^{q-1}\partial_{t}\left(k\ast\left(u-u_0\right)\right)(t)dx.
\end{split}\end{equation}
we have that
\begin{align*}0&\geq \|u(t)\|^{q-1}_{L^q(\Omega)}\partial_{t}\left(g_{1-\alpha}\ast\left(\|u(\cdot)\|_{L^q(\Omega)}-\|u_0\|_{L^q(\Omega)}\right)\right)(t)+ C_\mathcal{N}\|u(\cdot,t)\|^{q-1+\gamma}_{L^q(\Omega)} \\&= \|u(t)\|^{q-1}_{L^q(\Omega)}\left[\partial_{t}\left(g_{1-\alpha}\ast\left(\|u(\cdot)\|_{L^q(\Omega)}-\|u_0\|_{L^q(\Omega)}\right)\right)(t)+ C_\mathcal{N}\|u(\cdot,t)\|^{\gamma}_{L^q(\Omega)}\right],\end{align*} hence
\begin{align*}\partial_{t}\left(g_{1-\alpha}\ast\left(U-U_0\right)\right)(t)+ C_\mathcal{N}U^{\gamma}(t)\leq 0,\end{align*}
where $U(t)=\|u(\cdot,t)\|_{L^q(\Omega)}.$ It is known that (see \cite{Vald2, Zacher1})
$U(t)\leq \frac{C}{1+t^{\alpha/\gamma}},$ hence
$$\|u(\cdot,t)\|_{L^q(\Omega)}\leq \frac{C}{1+t^{\alpha/\gamma}},$$
where $C>0,$ possibly depending on $C_\mathcal{N},$ $\alpha,$ $\gamma$ and $\|u_0\|_{L^q(\Omega)}.$

Similarly, the problem \eqref{1*}, \eqref{2}, \eqref{3} can be reduced to the integro-differential inequality
\begin{align*}\partial_{t}\left(k\ast\left(U-U_0\right)\right)(t)+ C_\mathcal{N}U^{\gamma}(t)\leq 0,\end{align*}
with $U(t)=\|u(\cdot,t)\|_{L^q(\Omega)}.$ However, for the last inequality there is not yet the decay estimates of solution, and this is still an open question. Consequently, for problem \eqref{1*}, \eqref{2}, \eqref{3} the question of the decay of the solutions remains open.

{\bf Blow-up of solutions.} It is quite difficult to obtain a blow-up solution for problem \eqref{1*}, \eqref{2}, \eqref{3} in the general case due to technical problems. At the moment, we have no idea how to prove a blow-up solution for problem \eqref{1*}, \eqref{2}, \eqref{3} even in case $k(t)=g_{1-\alpha}(t)=\frac{t^{-\alpha}}{\Gamma(1-\alpha)},\alpha\in(0,1),$ and we will leave this question open as well.

\section*{Acknowledgements}
This research has been funded by the Science Committee of the Ministry of Education and Science of the Republic of Kazakhstan (Grant No. AP09259578), by the FWO Odysseus 1 grant G.0H94.18N: Analysis and Partial Differential Equations and by the Methusalem programme of the Ghent University Special Research Fund (BOF) (Grant number 01M01021).

\section*{Conflict of interest}
There is no conflict of interest.

\end{document}